\definecolor{amber}{rgb}{1.0, 0.75, 0.0}
\newcommand{\R}{\mathbb{R}}
\newcommand{\N}{\mathbb{N}}
\newcommand{\FF}{\mathscr{F}}
\newcommand{\nn}[1]{{\vert\kern-0.25ex\vert\kern-0.25ex\vert #1 
    \vert\kern-0.25ex\vert\kern-0.25ex\vert}}
\newcommand{\lnn}[1]{{\left\vert\kern-0.25ex\left\vert\kern-0.25ex\left\vert #1 
    \right\vert\kern-0.25ex\right\vert\kern-0.25ex\right\vert}}
\renewcommand{\leq}{\leqslant}
\newcommand{\ou}{%
  \mathrel{%
    \vcenter{\offinterlineskip
      \ialign{##\cr$\forall$\cr\noalign{\kern-1.5pt}$\exists$\cr}%
    }%
  }%
}
\newtheorem{theorem}{Theorem}[section]
\newtheorem*{theoremA}{Theorem A}
\newtheorem{lemma}[theorem]{Lemma}
\newtheorem{corollary}[theorem]{Corollary}
\theoremstyle{definition}
\theoremstyle{remark}
\newtheorem{remark}{Remark}
\title[Large cardinals and filter bases]{Large cardinals and continuity of coordinate functionals of filter bases in Banach spaces}
\subjclass[2010]{Primary 46B15, Secondary 03E60}
\author[T.~Kania]{Tomasz Kania}
\address[T.~Kania]{Mathematical Institute\\Czech Academy of Sciences\\\v Zitn\'a 25 \\115 67 Praha 1\\Czech Republic  and  Institute of Mathematics and Computer Science\\ Jagiellonian University\\ {\L}ojasiewicza 6, 30-348 Krak\'{o}w, Poland
}
\email{kania@math.cas.cz, tomasz.marcin.kania@gmail.com}
\author[J.~Swaczyna]{Jaros{\l}aw Swaczyna}
\address[J.~Swaczyna]{Mathematical Institute\\Czech Academy of Sciences\\\v Zitn\'a 25 \\115 67 Praha 1\\Czech Republic  and  Institute of Mathematics, {\L}\'od\'z University of Technology, W\'olcza\'nska 215, 93-005 {\L}\'od\'z, Poland}
\email{swaczyna@math.cas.cz, jswaczyna@wp.pl}
\thanks{The authors acknowledge with thanks support received from GA\v{C}R project 19-07129Y; RVO 67985840.}
\keywords{Large cardinal axioms, super-compact cardinal, projective determinacy, filter bases in Banach spaces, automatic continuity, Polish spaces of separable Banach spaces}
\subjclass[2010]{Primary 46B20, 46H40; Secondary 03E55, 03E60}
\date{\today}
\begin{document}
\maketitle
\begin{abstract}
    Assuming the existence of certain large cardinal numbers, we prove that for every projective filter $\mathscr F$ over the set of natural numbers, $\mathscr{F}$-bases in Banach spaces have continuous coordinate functionals. In particular, this applies to the filter of statistical convergence, thereby we solve a problem by V.~Kadets (at least under the presence of certain large cardinals). In this setting, we recover also a result of Kochanek who proved continuity of coordinate functionals for countably generated filters (\emph{Studia Math.},~2012).
\end{abstract}
\section{Introduction}
Let $\mathscr{F}$ be a filter of subsets of $\mathbb N$, that is, an upwards-closed with respect to the inclusion family of subsets of $\mathbb N$ that is also closed under taking intersections. We will be exclusively interested in filters that contain the Fr\'echet filter that comprises those subsets of the set of natural numbers whose complement is finite. Every such a filter defines naturally a notion of convergence of sequences in a metric space $X$; a~sequence $(x_n)_{n=1}^\infty$ in $X$ converges to $x$ \emph{along the filter} $\mathscr{F}$ (or $\mathscr{F}$-\emph{converges to }$x$, in which case we write $x= \lim_{n, \mathscr{F}} x_n$), whenever for every $\varepsilon > 0$ there is $A\in \mathscr{F}$ such that $d(x, x_n) < \varepsilon$ for every $n\in A$. Clearly, convergence along the Fr\'echet filter rectifies the usual convergence of sequences in a metric space. \smallskip

The notion of a filter is dual to the notion of an ideal of sets---for a given filter, the dual ideal comprises complements of sets from the filter. Filters are often interpreted as families of sets that are considered big, whereas sets from a given ideal may be thought as negligible. Due to the complete interchangeability between filters and ideals, we take the liberty of freely referring to facts about ideals (\emph{e.g.}, \cite{Farah}), when required. \smallskip

For reasons related to approximation in Banach spaces, it is desirable to weaken the notion a Schauder basis to encompass $\mathscr{F}$-convergence with respect to filters $\FF$ over the set of natural numbers (see, \emph{e.g.}, results in \cite{CGK}). A sequence $(x_n)_{n=1}^\infty$ in $X$ is an $\mathscr{F}$\emph{-basis} of $X$, whenever for any $x\in X$ there exists a unique sequence of scalars $(a_n(x))_{n=1}^\infty$ such that 
$$ x = \lim_{n, \mathscr{F}} \sum_{j=1}^n a_j(x) x_j = : \sum_{n, \mathscr{F}} a_n(x) x_n.$$
Usually, for practical reasons, it is additionally assumed in the definition of the $\mathscr{F}$-basis that the linear functionals $x\mapsto a_n(x)$ ($n\in \mathbb N$), that we shall call the \emph{coordinate functionals}, are continuous (see, \emph{e.g.}, \cite{CGK, GanichevKadets}). Conspicuously, every Banach space with an $\mathscr{F}$-basis is separable and spaces with $\FF$-bases whose coordinate functionals are uniformly bounded have the approximation property.\smallskip

During the 4\textsuperscript{th} conference \emph{Integration, Vector Measures, and Related Topics} held in 2011 in Murcia, V.~Kadets recalled the question of whether the hypothesis of continuity of coordinate functionals is redundant in the definition of an $\mathscr{F}$-basis, especially in relation to the filter of statistical convergence:
$$\mathscr{F}_{{\rm st }} = \big\{ A\subseteq \mathbb N\colon \lim_{n\to \infty} \frac{|A\cap \{1, \ldots, n\}|}{n} = 1\big\}, $$
which is of particular interest. Evidently, the problem of automatic continuity of the coordinate functionals is equivalent to continuity of the initial basis projections given by $P_nx = \sum_{k=1}^n a_k(x) x_k$ ($x\in X, n\in \mathbb N$). Kochanek showed in \cite{Kochanek} that for countably-generated filters (more precisely, for filters generated by less than $\mathfrak p$ sets, where $\mathfrak p$ is the so-called \emph{pseudo-intersection number}), the coordinate functionals are automatically continuous by establishing uniform boundedness of the initial projections associated to an $\FF$-basis (under the Continuum hypothesis, $\mathfrak{p}=\mathfrak{c}$, the continuum).\smallskip

In the case of usual Schauder bases (that is, bases with respect to the Fr\'echet filter) the standard proofs invoke the uniform boundedness principle/open mapping theorem in order to conclude that the initial basis projections are uniformly bounded. However, already for the filter of statistical convergence, there exist examples of $\mathscr{F}_{{\rm st}}$-bases in a Hilbert space whose initial projections are bounded, but not uniformly (\cite[Example 1]{Kochanek}). Consequently, any proof of automatic continuity of coordinate functionals associated to \emph{every} $\mathscr{F}$-basis would at the same time constitute a~genuinely new proof of continuity of evaluation functionals associated to Schauder bases. \smallskip 

In the present paper, we show that under the assumption of the existence of sufficiently large cardinal numbers that guarantee certain regularity properties of projective subsets of Polish spaces, the coordinate functionals of $\mathscr{F}$-bases with respect to projective filters (so, in particular, the filter of statistical convergence) are continuous. (Any subset of $\mathbb N$ may be naturally identified with its indicator function, which is an element of the Cantor set $\Delta = \{0,1\}^{\mathbb N}$. Thus, a filter on $\mathbb N$ may be viewed as a subset of $\Delta$--in particular, it may or may not be a projective subset thereof.)\smallskip

Our main result reads as follows:

\begin{theoremA}Assume \emph{\textsf{ZFC + }}there is a super-compact cardinal\footnote{Alternatively, by \cite[Conclusion 4.3]{ShelahWoodin}, we may assume \textsf{ZFC +} there exist infinitely many Woodin cardinals followed by a measurable cardinal.}. Let $\mathscr{F}$ be a filter that is a projective subset of the Cantor set. Then, the coordinate functionals associated to every $\mathscr{F}$-basis in a Banach space are continuous.
\end{theoremA}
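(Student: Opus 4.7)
The plan is to establish, for each coordinate functional $a_n \colon X \to \R$, the chain: (i) $a_n$ has a projective graph; (ii) therefore $a_n$ has the Baire property, by the large-cardinal hypothesis; and (iii) a Baire-measurable linear functional on a Banach space is automatically continuous.

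First, I would deal with (i). Since $(x_n)_n$ is linearly dense, $X$ is separable, hence a Polish space. Identify $\mathscr{F}$ with a projective subset of the Cantor set $\Delta = \{0,1\}^{\mathbb{N}}$. The key step is to show that
\[
G = \bigl\{(x, (c_k)_k) \in X \times \R^{\mathbb{N}} : \textstyle\sum_{k, \mathscr{F}} c_k x_k = x \bigr\}
\]
is a projective subset. Unwinding $\mathscr{F}$-convergence,
\[
(x, (c_k)_k) \in G \iff \forall \varepsilon \in \Q_{>0} \colon \bigl\{n \in \N : \n{x - \textstyle\sum_{k=1}^n c_k x_k} < \varepsilon\bigr\} \in \mathscr{F}.
\]
For each fixed rational $\varepsilon > 0$, the map $(x, (c_k)_k) \mapsto \{n : \n{x - \sum_{k=1}^n c_k x_k} < \varepsilon\} \in \Delta$ is Borel (each coordinate is an open condition), so its preimage of the projective set $\mathscr{F}$ is projective; intersecting over the countable set $\Q_{>0}$ keeps $G$ projective. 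By uniqueness of coefficients,
\[
\mathrm{graph}(a_n) = \bigl\{(x, c) \in X \times \R : \exists (c_k)_k \in \R^{\mathbb{N}},\ c_n = c \text{ and } (x, (c_k)_k) \in G \bigr\},
\]
which is again projective (existential quantification over the Polish space $\R^{\mathbb{N}}$). Consequently, preimages of open sets under $a_n$ are projective subsets of $X$.

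Next, for (ii), I would invoke the standard set-theoretic consequence of the hypothesis: the existence of a supercompact cardinal implies Projective Determinacy, under which every projective subset of a Polish space has the Baire property; hence each $a_n$ is Baire-measurable. For (iii) I would apply a classical Pettis-type argument: for each $m \in \N$ the set $B_m = \{x : |a_n(x)| \leq m\}$ is symmetric, convex, Baire-measurable, and $X = \bigcup_m B_m$, so the Baire category theorem produces a non-meager $B_m$, and Pettis' lemma then yields that $B_m - B_m \subseteq 2B_m$ contains a neighborhood of $0$; so $a_n$ is bounded near the origin and therefore continuous.

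The hard part will be the projective bookkeeping in step (i): verifying that the countable intersection over $\varepsilon \in \Q_{>0}$ and the existential projection over $\R^{\mathbb{N}}$ indeed preserve projective complexity (they do, but one should be careful about the precise projective level) and that the encoding of the set $\{n : \n{x - \sum_k c_k x_k} < \varepsilon\}$ in the Cantor set is really Borel as a function of $(x, (c_k)_k)$. Steps (ii) and (iii) are then direct applications of well-known set-theoretic regularity and functional-analytic automatic-continuity principles, respectively.
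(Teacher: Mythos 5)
Your proof is correct, but it takes a genuinely different route from the paper's. The paper never looks at an individual functional: it codes the whole statement ``every $\mathscr{F}$-basis of every $X\in{\rm SB}$ has continuous coordinate functionals'' as a single projective sentence over the standard Borel space ${\rm SB}$ of closed subspaces of $C(\Delta)$ (continuity being expressed by the existence of bounds $|a_k|\leq M_k\|y\|$), observes via the Garnir--Wright phenomenon that the sentence holds in $L(\mathbb R)$ --- where, under the large-cardinal hypothesis, \emph{every} linear map between Fr\'echet spaces is continuous --- and then transfers it to $V$ by absoluteness of projective formulae. You instead stay entirely in $V$: you fix $X$, show each $a_n$ has projective graph (this complexity computation is essentially the paper's Lemma~\ref{fconvergence}, and your observation that the map $(x,(c_k)_k)\mapsto\{n:\|x-\sum_{k\leq n}c_kx_k\|<\varepsilon\}$ is merely Borel, coordinatewise the indicator of an open set, is in fact more accurate than the paper's claim that the analogous map is continuous --- Borel suffices since projective classes are closed under Borel preimages), invoke the Baire property of projective sets in $V$ (Shelah--Woodin/PD, which is exactly what the paper's Theorem~\ref{swtheorem} and Remark~\ref{mainremark} supply), and finish with the classical Pettis argument for BP-measurable linear functionals. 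Your route buys self-containedness: it needs no Effros--Borel/${\rm SB}$ machinery and, more importantly, no inner-model transfer, so it sidesteps the delicate absoluteness justification that is the touchiest point of the paper's argument; the price is that you must carry out the automatic-continuity step (Pettis) yourself, whereas the paper outsources all functional analysis to the choiceless Garnir--Wright theorem. Both arguments rest on the same two pillars --- the projectivity of $\mathscr{F}$-convergence and the Baire property of projective sets under the large-cardinal hypothesis --- so I consider your proposal a valid, and arguably cleaner, alternative proof.
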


It is known that the filter of statistical convergence is $F_{\sigma \delta}$ (see, \emph{e.g.}, \cite[Lemma 1.2.2 and p.~9]{Farah}), hence Borel, so in particular projective. We may thus record separately the following corollary.

\begin{corollary}In the theory assumed in Theorem A, the coordinate functionals associated to every $\mathscr{F}_{{\rm st}}$-basis in a Banach space, that is a filter basis with respect to the filter of statistical convergence, are continuous.
\end{corollary}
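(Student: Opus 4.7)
The plan is to derive the corollary as an immediate instance of Theorem A. The only substantive thing to verify is that the filter of statistical convergence $\mathscr{F}_{\mathrm{st}}$ is a projective subset of the Cantor set $\Delta = \{0,1\}^{\mathbb{N}}$; once this is in place, Theorem A applies verbatim to any $\mathscr{F}_{\mathrm{st}}$-basis in a Banach space.

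To confirm projectivity, I would simply unfold the quantifier structure of the defining condition. Identifying a subset $A \subseteq \mathbb{N}$ with its indicator $\chi_A \in \Delta$, membership in $\mathscr{F}_{\mathrm{st}}$ is equivalent to
$$\forall k \in \mathbb N \; \exists N \in \mathbb N \; \forall n \geq N \colon \frac{|A\cap\{1,\ldots,n\}|}{n} > 1 - \frac{1}{k}.$$
For fixed $k$, $N$, and $n$, the innermost inequality depends only on finitely many coordinates of $\chi_A$, hence defines a clopen subset of $\Delta$. Thus $\mathscr{F}_{\mathrm{st}}$ is a $\Pi^{0}_{3}$ (equivalently $F_{\sigma\delta}$) subset of $\Delta$, confirming the classification already attributed to Farah in the excerpt. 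Since every Borel set is in particular projective, $\mathscr{F}_{\mathrm{st}}$ falls under the scope of Theorem A.

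With this observation in hand, the corollary is a direct invocation of Theorem A: under \textsf{ZFC} together with the existence of a super-compact cardinal (or, alternatively, infinitely many Woodin cardinals below a measurable cardinal), every coordinate functional associated with an $\mathscr{F}_{\mathrm{st}}$-basis in a Banach space is continuous. No further obstacle arises at this stage; the entire substance of the argument is already carried by Theorem A, and the role of the corollary is merely to isolate the case of statistical convergence, which motivated the question of Kadets.
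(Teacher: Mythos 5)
Your proposal is correct and follows exactly the route the paper takes: observe that $\mathscr{F}_{\mathrm{st}}$ is $F_{\sigma\delta}$ (hence Borel, hence projective) and then apply Theorem A directly. The only difference is that you unfold the quantifier computation yourself where the paper simply cites Farah, which is a harmless elaboration.
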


There are only three isomorphic types of countably generated (proper) filters over $\mathbb N$ that contain the Fr\'echet filter $\text{Fr}$: $\text{Fr}$ itself, $\text{Fr}\oplus \Delta$, and $\text{Fr}\otimes \Delta$, where the latter two are, respectively the Fubini sum and the Fubini product of $\text{Fr}$ and the power-set of the natural numbers, which one may verify directly. (Here, two filters $\FF_1$ and $\FF_2$ over sets $X$ and $Y$ are isomorphic when there exists a bijection $f\colon X \to Y$ such that $A \in \FF_1 \Leftrightarrow f[A] \in \FF_2$; \emph{cf.}~\cite[Lemma 1.2.8]{Farah}, where the notion of a filter isomorphism is slightly different (\cite[Definition 1.2.7]{Farah}).
All the above-mentioned filters are Borel---every countably generated filter is thus Borel as filter isomorphisms (implemented by autobijections of $\mathbb N$) induce self-homeomorphisms of the Cantor set. Consequently, having noticed that countably generated filters are Borel, we recover the main result of Kochanek 
\cite{Kochanek} (at least when $\mathfrak{p}=\aleph_1$), although in a~theory stronger than \textsf{ZFC}.

\begin{corollary}Let $\FF$ be a countably generated filter. In the theory assumed in Theorem A, the coordinate functionals associated to every $\mathscr{F}$-basis in a Banach space are continuous.
\end{corollary}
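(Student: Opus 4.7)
The plan is to reduce the statement to Theorem A by showing that every countably generated filter on $\mathbb{N}$ extending the Fr\'echet filter is a Borel, hence projective, subset of $\Delta$. Once this descriptive complexity bound is in place, Theorem A applies to yield the conclusion with no further work.

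The first step is the classification stated in the paragraph immediately preceding the corollary: up to isomorphism induced by an autobijection of $\mathbb{N}$, the proper countably generated filters extending $\text{Fr}$ are exhausted by the three types $\text{Fr}$, $\text{Fr}\oplus\Delta$, and $\text{Fr}\otimes\Delta$. I would prove this directly from the definitions, starting from a countable base of $\FF$, passing to the decreasing sequence $G_n$ of finite intersections, and distinguishing cases according to the position of the core $C=\bigcap_n G_n$ and the sizes of the successive differences $G_n\setminus G_{n+1}$. A suitable autobijection of $\mathbb{N}$ then moves the resulting combinatorial data onto one of the three model filters. This combinatorial analysis is the one substantive point in the plan, and it is routine.

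Each of the three model filters is visibly Borel in $\Delta=\{0,1\}^{\mathbb{N}}$: $\text{Fr}$ is an $F_\sigma$ set, and both Fubini constructions preserve Borel measurability. Moreover, any autobijection $\pi\colon\mathbb{N}\to\mathbb{N}$ induces the self-homeomorphism $(x_n)\mapsto(x_{\pi^{-1}(n)})$ of $\Delta$, and Borel subsets are preserved under homeomorphisms. Consequently the filter $\FF$ of the corollary, being isomorphic to one of the three Borel model filters, is itself Borel, hence projective, and Theorem A delivers the continuity of the coordinate functionals of every $\FF$-basis. I foresee no serious obstacle: the combinatorial classification is elementary and the descriptive-set-theoretic step is a soft fact.
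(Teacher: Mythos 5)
Your proposal coincides with the paper's own derivation: the paper likewise reduces the corollary to Theorem A by invoking the classification of countably generated proper filters containing $\mathrm{Fr}$ into the three types $\mathrm{Fr}$, $\mathrm{Fr}\oplus\Delta$, $\mathrm{Fr}\otimes\Delta$, observing that each is Borel and that filter isomorphisms induce self-homeomorphisms of $\Delta$, hence every countably generated filter is Borel and in particular projective. The only cosmetic difference is that you offer to verify the classification in detail where the paper merely asserts it is routine (and one could even bypass the classification entirely by noting directly that such a filter is $F_\sigma$), so there is nothing to correct.
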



We shall refrain ourselves from defining  super-compact or Woodin cardinals precisely, instead referring the reader to \cite[Chapter 34]{Jech}. Readers unfamiliar with large-cardinal axioms may think of these as a way of expanding the familiar theory \textsf{ZF} in a way guaranteeing the absoluteness of all projective sentences. \smallskip 

It is to be noted that the assumption on the existence of large cardinals considered in the present paper is stronger than classical axioms that are usually added to \textsf{ZFC} such as \textsf{CH}, \textsf{MA}, or assumptions concerning certain cardinal invariants. Namely, even though, by G\"odel's theorem, we can not prove that \textsf{ZF} is consistent, it is provable that, \emph{e.g.}, `\textsf{ZF} is consistent if and only if so is \textsf{ZFC} + \textsf{CH}'. Analogous claims remain true after replacing \textsf{CH} with other standard independent assertions (\emph{e.g.}, statements obtainable from any \textsf{ZFC} model via forcing). For large cardinal axioms the situation is a bit different, namely if `\textsf{ZF} + existence of certain large cardinal' is consistent, then clearly so is \textsf{ZF}, however the converse implication can not be proved (unless \textsf{ZF} is inconsistent). Nonetheless, large cardinal axioms are widely used especially in relation to Descriptive set theory.

\section{Preliminaries}

\subsection{Inner models of \textsf{ZF}(\textsf{C}) and projective sets in Polish spaces} By an inner model of $\textsf{ZF}$, we understand a subclass of von Neumann's universe of all sets $V$ (the class of hereditarily well-founded sets) that contains the class of all ordinal numbers and which satisfies the axioms of the Zermelo--Fraenkel set theory $\textsf{ZF}$.\smallskip

We define recursively the following sets. We set $L_0(\mathbb R)=\mathbb R$. When $\alpha = \beta +1$ is a~successor ordinal, $L_\alpha(\mathbb R)$ comprises all sets in $L_\beta(\mathbb R)$ together with all subsets of $L_\beta(\mathbb R)$ that are definable from first-order formulae with parameters. For $\alpha$ being a limit ordinal, we set $L_\alpha(\mathbb R)=\bigcup_{\beta < \alpha} L_\beta(\mathbb R)$. Finally, we define the class $L(\mathbb R)$ to be the union of $L_\alpha(\mathbb R)$, where $\alpha$ ranges the class of all ordinal numbers. Roughly speaking, $L(\mathbb R)$ is the smallest (inner) model of set theory that contains the real numbers and all ordinal numbers. It is well known that $L(\mathbb R)$ is an inner model of \textsf{ZF} (however \textsf{AC}, the Axiom of Choice, fails in $L(\mathbb R)$).

\subsubsection{Projective sets}
A subset $A$ of a Polish space $X$ (completely metrisable separable topological space) is \emph{projective}, when it belongs to one of the (boldface) families $\mathbf \Sigma^1_n$ for some $n$, where $\mathbf\Sigma^1_1$ is the family of analytic sets (continuous images of Borel sets in Polish spaces), $\mathbf\Pi^1_1$ is the family of sets whose complements are analytic, and $A$ is in $\mathbf\Sigma^1_{n+1}$, whenever there exists a Polish space $Y$ and a $\mathbf\Pi^1_n$ set $B$ in $X\times Y$ such that $A = \pi_X(B)$, where $\pi_X$ is the projection onto the first coordinate. Projective subsets of a Polish space $X$ are in correspondence with certain formulae allowing for quantification over the space. For $n\in \mathbb N$, we say that a formula of the form
$$\psi(x) \equiv \exists_{f_1 \in X}\; \forall_{f_2\in X}\; \ldots \; \ou{}_{f_n\in X} \Phi(f_1, f_2, \ldots, f_n, x, g)$$
is \emph{projective} (and $\mathbf \Sigma^1_n$), where $\Phi$ is a formula in the language of arithmetic,  possibly contains quantifiers over a~countable set, and $g\in X$ is an arbitrary parameter. More precisely, a~set $A\subset X$ is $\mathbf{\Sigma}^1_n$ if and only if there exists a $\mathbf \Sigma^1_n$ formula $\psi$ such that $A = \{x\in X\colon \psi(x)\}$. Analogously, a similar correspondence exists between $\mathbf \Pi^1_n$-sets and $\mathbf \Pi^1_n$-formulae.\smallskip

For more details concerning projective sets see, \emph{e.g.}, \cite[Chapter V]{Kechris}. For interrelation between projective sets and large cardinals, we refer the reader to \cite[Chapter 32]{Jech}.\smallskip

Projective sets (in Euclidean spaces) were first introduced by Lusin, who investigated whether they are Lebesgue-measurable and/or have the Baire property (that is, whether they differ from an open set by a meagre set). This is indeed so in the so-called Solovay model. We shall require a similar result by Shelah and Woodin (\cite[Theorem 1.7]{ShelahWoodin}, see also \cite[Corollary 34.7]{Jech}), which asserts that under a suitable large cardinal hypothesis, every projective set in a Polish space in $L(\mathbb R)$ has the Baire property and is Lebesgue-measurable.

\begin{theorem}[Shelah--Woodin]\label{swtheorem} Assume that there exists a super-compact cardinal number (or, there exist infinitely many Woodin cardinals followed by a measurable cardinal). Then,\begin{itemize}
    \item every projective subset of $\mathbb R$ is in $L(\mathbb R)$,
    \item every subset of $\mathbb R$ that is in $L(\mathbb R)$ has the Baire property,
    \item every subset of $\mathbb R$ that is in $L(\mathbb R)$ is Lebesgue-measurable.\end{itemize} 
\end{theorem}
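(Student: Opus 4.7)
The plan is to reduce the three assertions to well-known descriptive-set-theoretic principles together with the deep fact that, under the large-cardinal hypothesis, the inner model $L(\mathbb{R})$ satisfies the Axiom of Determinacy ($\mathsf{AD}$).

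First, the inclusion of projective sets in $L(\mathbb{R})$ is essentially by construction and is the routine part. Every projective $A \subseteq \mathbb{R}$ is defined by a formula of the shape $\exists f_1 \, \forall f_2 \, \cdots \, \Phi(f_1,\ldots,f_n,x,g)$ with arithmetic $\Phi$ and a real parameter $g$. Because $\mathbb{R}\subseteq L(\mathbb{R})$ and each $L_{\alpha+1}(\mathbb{R})$ adjoins \emph{all} subsets of $L_\alpha(\mathbb{R})$ definable by first-order formulae with parameters in $L_\alpha(\mathbb{R})$, an easy induction on projective complexity places $A$ in some $L_\alpha(\mathbb{R})$, which yields the first bullet.

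Second, the key middle step is to prove that $L(\mathbb{R}) \models \mathsf{AD}$. Here I would appeal to the Martin--Steel theorem (infinitely many Woodin cardinals yield projective determinacy) together with Woodin's strengthening that, from a supercompact (or infinitely many Woodins capped by a measurable), \emph{every} set of reals in $L(\mathbb{R})$ is determined. The machinery behind this --- the stationary tower, iteration trees, and derived-model constructions --- is far beyond what can sensibly be reproved, so I would treat $\mathsf{AD}^{L(\mathbb{R})}$ as a black box supplied by \cite{ShelahWoodin} and its antecedents.

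Third, granting $\mathsf{AD}^{L(\mathbb{R})}$, the regularity properties follow from classical game-theoretic arguments carried out inside $L(\mathbb{R})$. For the Baire property of $A\in L(\mathbb{R})$, run the Banach--Mazur game $G^{**}(A)$ in which players alternately choose shrinking basic open sets and Player~I wins iff the intersection point lies in $A$; determinacy forces $A$ either to be meager in every basic open set or comeager in some, and an exhaustion along a maximal antichain of basic open sets in which $A$ is comeager yields that $A$ has the Baire property. For Lebesgue measurability, use the Mycielski--{\'S}wierczkowski covering game: fixing $\varepsilon>0$, players alternately specify coverings of $A$ by open sets whose total measure is controlled by a preassigned summable sequence, and determinacy translates into the existence of Borel approximations from above and below whose outer measures differ by at most~$\varepsilon$; letting $\varepsilon\to 0$ gives Lebesgue measurability.

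The main obstacle is unquestionably the middle step: extracting $\mathsf{AD}^{L(\mathbb{R})}$ from the existence of a supercompact cardinal (or the Woodin-plus-measurable configuration). The first bullet is immediate from the definition of $L(\mathbb{R})$, and the deductions of Baire property and Lebesgue measurability from $\mathsf{AD}$ are textbook (see, e.g., \cite[Chapter 33]{Jech}); but the core-model-theoretic engine underlying the determinacy transfer must be cited rather than reproduced.
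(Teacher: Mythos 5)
The paper does not prove this statement at all: it is imported wholesale as a citation to Shelah--Woodin (Theorem~1.7 of their paper, see also Corollary~34.7 in Jech), so there is no internal argument to compare yours against. Your outline is nevertheless a correct and standard route to the result, with the honest caveat that the entire content is concentrated in the step you black-box, namely that the large-cardinal hypothesis yields $\mathsf{AD}$ in $L(\mathbb R)$. Two remarks on the details. First, your route via $\mathsf{AD}^{L(\mathbb R)}$ plus the Banach--Mazur and Mycielski--\'Swierczkowski games is actually \emph{not} how Shelah and Woodin originally argue: their paper derives the regularity properties directly from generic absoluteness of the theory of $L(\mathbb R)$ (via a Solovay-style argument using random and Cohen forcing over the relevant inner model), and the determinacy of all sets in $L(\mathbb R)$ from a supercompact is a separate, later theorem of Woodin; so if one wants to cite a single source for your middle step, it is that later work rather than the 1990 paper itself. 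Second, your game-theoretic arguments establish the Baire property and measurability \emph{inside} $L(\mathbb R)$, whereas the theorem as stated (and as used in the paper, via Corollary~\ref{garnirwright}) needs them in $V$; this transfer is harmless but deserves a sentence --- since $L(\mathbb R)$ contains every real of $V$, the notions of Borel code, meagre set, and null set are absolute between $L(\mathbb R)$ and $V$, so a witness to regularity found in $L(\mathbb R)$ is a witness in $V$. Your observation that the first bullet is pure $\mathsf{ZF}$ bookkeeping, requiring no large cardinals, is correct.
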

Garnir noticed that if in a model of \textsf{ZF}+\textsf{DC} every subset of the real numbers is Lebesgue-measurable, then every linear map between Fr\'echet spaces (in particular, Banach spaces) is continuous (\cite[pp.~195--198]{garnir}). A similar conclusion was independently made by Wright  \cite{Wright}, who used the Baire property of all subsets of Polish spaces in a given model. \smallskip
Let us then record the above remark formally.
\begin{corollary}\label{garnirwright}Assume that there exists a super-compact cardinal number (or, there exist infinitely many Woodin cardinals followed by a measurable cardinal). Then, every linear map between Fr\'echet spaces in $L(\mathbb R)$ is continuous.\end{corollary}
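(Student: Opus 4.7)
The plan is to transport the Garnir--Wright automatic continuity theorem into our setting by working inside the inner model $L(\mathbb R)$. Recall that Garnir \cite{garnir} (and independently Wright \cite{Wright}) established that within any model of \textsf{ZF}+\textsf{DC} in which every subset of $\mathbb R$ is Lebesgue-measurable (respectively, has the Baire property), every linear map between Fréchet spaces is continuous. The task therefore reduces to verifying that the hypotheses of that theorem are met inside $L(\mathbb R)$ under our large cardinal assumption.

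First, I would note that $L(\mathbb R)$ is an inner model of \textsf{ZF} by its very construction. Under the existence of a super-compact cardinal---or, alternatively, the existence of infinitely many Woodin cardinals with a measurable cardinal above them---it is standard that $L(\mathbb R)$ satisfies the axiom of dependent choice \textsf{DC}; see, \emph{e.g.}, \cite[Chapter~33]{Jech}. Next, Theorem~\ref{swtheorem} furnishes the remaining input: every subset of $\mathbb R$ lying in $L(\mathbb R)$ is Lebesgue-measurable (and has the Baire property). Read from the vantage point of the inner model, this is precisely the assertion that, inside $L(\mathbb R)$, every subset of the reals is Lebesgue-measurable---the exact hypothesis of the Garnir--Wright theorem.

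Applying Garnir's theorem inside $L(\mathbb R)$, we conclude that, relative to $L(\mathbb R)$, every linear map between Fréchet spaces is continuous. To transfer the conclusion back to the ambient universe $V$, one exploits absoluteness: if $E$ and $F$ are Fréchet spaces in $L(\mathbb R)$ and $T\colon E\to F$ is a linear map that also belongs to $L(\mathbb R)$, then continuity of $T$ is a statement about the values of $T$ on a countable dense subset of $E$ (through the standard translation-invariant $F$-norms), a piece of data that is shared between $L(\mathbb R)$ and $V$. Hence continuity of $T$ in $L(\mathbb R)$ entails continuity of $T$ in $V$.

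I expect the principal obstacle to be bookkeeping rather than mathematical depth: one must be vigilant about which objects are asserted to reside in $L(\mathbb R)$. The mathematical heart of the argument is entirely supplied by the two deep inputs already in the paper---Shelah--Woodin regularity (Theorem~\ref{swtheorem}) and Garnir--Wright automatic continuity---together with the well-known fact that \textsf{DC} holds in $L(\mathbb R)$ under the stated large cardinal hypothesis. The remainder is the verification, outlined above, that the Garnir--Wright hypotheses are genuinely available in the inner model and that the resulting continuity is absolute.
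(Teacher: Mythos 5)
Your proposal is correct and follows essentially the same route as the paper, which records this corollary as an immediate consequence of reading Theorem~\ref{swtheorem} inside $L(\mathbb R)$ (where \textsf{DC} holds) and then invoking the Garnir--Wright automatic continuity theorem in that inner model. Your closing absoluteness step, transferring continuity of an individual map back to $V$, is a harmless addition; the paper instead performs its transfer later, at the level of the projective formula in the proof of Theorem A.
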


A formula $\psi$ in the language of \textsf{ZF} is $\Delta_0$, whenever it does not involve unbounded quantifiers (see \cite[Definition 12.8]{Jech} for a more formal framework). Such formulae are absolute across all inner models of \textsf{ZF} (\cite[Lemma 12.8]{Jech}). Consequently, $V \models \psi$ if and only if $L(\mathbb R) \models \psi$. Consequently, if $V$ proves \textsf{DC}, the Axiom of dependent choices (a statement asserting that all binary relations with no finite descending chains are well-founded), then so does $L(\mathbb R)$.\smallskip

Let us then record the following simple remark, which will be of particular importance in the proof of the main result.
\begin{remark}\label{mainremark}Every $\mathbf \Pi^1_n$- or $\mathbf \Sigma^1_n$-formula concerning Polish spaces is $\Delta_0$, because it involves only bounded quantifiers. Consequently, $V$ and $L(\mathbb R)$ agree on every $\mathbf \Pi^1_n$ (and $\mathbf \Sigma^1_n$) formula. Thus, under the assumptions of Theorem~\ref{swtheorem}, all projective sets in Polish spaces in $L(\mathbb R)$ have the Baire property, which means that this is also the case in $V$, regardless of the Axiom of choice. \end{remark}



\subsection{Spaces of separable Banach spaces}
Our notation concerning Banach spaces is standard. We consider Banach spaces over the scalar field $\mathbb K$ being either the field of real or complex numbers.\smallskip

It is a widely known fact, sometimes referred as the Banach--Mazur theorem, that the space of continuous functions on the Cantor set endowed with the supremum norm, $C(\Delta)$, is linearly isometrically universal for the class of all separable Banach spaces. Perhaps it is worthwhile mentioning that the Banach--Mazur theorem is true in $\mathsf{ZF} + \mathsf{AC}_\omega$ (that is, $\mathsf{ZF}$ and the Axiom of countable choice; it is immediate that $\mathsf{AC}_\omega$ is a~consequence of $\mathsf{ZF}+\mathsf{DC}$). For the sake of completeness, let us the sketch the proof here.\smallskip

\begin{proof}[Sketch of the proof of the Banach--Mazur theorem in $\mathsf{ZF} + \mathsf{AC}_\omega$] Dodu and Morillon made a remark (see \cite[p.~312]{DoduMorillon}) that on the ground of \textsf{ZF}, if a~Banach space contains a dense well-orderable subset, then the Hahn--Banach theorem is valid for $X$ (that is, every continuous linear functional defined on a subspace of $X$ extends to a continuous linear functional on $X$ with the same norm). (In particular, norm-one continuous linear functionals on $X$ separate points in $X$.)  It follows that $\mathsf{AC}_\omega$ is sufficient for proving the Hahn--Banach theorem for separable Banach spaces. We observe that the Banach--Alaoglu theorem for separable Banach space is constructive (\cite[Theorem 3.2.1]{BS}), so we may conclude that the closed unit ball $B_{X^*}$ of the dual space $X^*$ is weak* (sequentially) compact. Consequently, the mapping $T\colon X\to C(B_{X^*})$ given by $(Tx)(f) = \langle f,x\rangle$ ($x\in X, f\in B_{X^*}$) is isometric. Finally, we observe that the fact asserting that every compact metric space is a continuous image of $\Delta$ is provable in $\mathsf{ZF}+\mathsf{AC}_\omega$, so there exists a continuous surjection $h\colon \Delta\to B_{X^*}$, which induces a linear isometric embedding $J_h\colon C(B_{X^*})\to C(\Delta)$ given by $J_hf = f \circ h$ ($f\in C(B_{X^*})$). This shows that $J_hT$ is an isometric embedding of $X$ into $C(\Delta)$.\end{proof}

Let $\mathcal{F}(C(\Delta))$ denote the hyperspace comprising all non-empty closed subsets of $C(\Delta)$. Then $\mathcal{F}(C(\Delta))$ may be endowed with the Effros--Borel structure making it a standard Borel space with the (Effros--Borel) $\sigma$-algebra generated by the sets
$$E^+(U) :=\{ F\in \mathcal{F}(C(\Delta))\colon F\cap U \neq \varnothing\} \qquad (U\subseteq C(\Delta)\text{ non-empty open set}).$$
 Apparently, there is no canonical Polish space topology on $\mathcal{F}(C(\Delta))$ making the Effros--Borel $\sigma$-algebra its Borel $\sigma$-algebra. Following Godefroy and Saint-Raymond \cite{GoSR}, we shall call every Polish topology on $\mathcal{F}(C(\Delta))$ \emph{admissible} as long as the sets of the form $E^+(U)$ are open with respect to that topology and whose every member may be written as a union of countably many sets of the form $E^+(U)\setminus E^+(V),$ where $U$ and $V$ are open in $C(\Delta)$. Clearly, the Borel $\sigma$-algebra of any admissible topology coincides with the Effros--Borel $\sigma$-algebra. As proved in \cite{GoSR}, admissible topologies in $\mathcal{F}(C(\Delta))$ are plentiful. Moreover, for any countable ordinal number $\gamma$, the classes of $\Sigma^0_\gamma$- and $\Pi^0_\gamma$-sets do not depend on the particular choice of an admissible topology. As the relation `$x\in F$' is $\Pi^0_2$ (\cite[Section 2]{GoSR}), it may be additionally requested that the set
 \begin{equation}\label{closed}
     \{(F, x)\colon F\in \mathcal{F}(C(\Delta)), x\in F\}
 \end{equation}
 is closed in $\mathcal{F}(C(\Delta))\times C(\Delta)$. Henceforth, we shall be using admissible topologies on $\mathcal{F}(C(\Delta))$ that meet this requirement.\medskip
 
 It turns out that the set $\text{SB}$ comprising all closed linear subspaces of $C(\Delta)$ is $\Pi^0_2$ in $\mathcal{F}(C(\Delta))$ and, as such, the relative topology on $\text{SB}$ is Polish (\cite[Section 3]{GoSR}). We refer the reader to \cite{Cuth} for a thoughtful study of admissible topologies on ${\rm SB}$.

\section{Proof of Theorem A}
We shall require the following simple estimation concerning complexity of convergence of $\FF$-series in  (separable) Banach spaces with respect to projective filters.

\begin{lemma}\label{fconvergence}
Let $X$ be a separable Banach space and let $\FF$ be a projective filter on $\mathbb N$ of class $\mathbf \Pi^1_n$. Suppose that $(z_k)_{k=1}^\infty$ is a sequence in $X$. Then, the following formula is $\mathbf \Pi^1_{n}$:
$$\varphi( (a_k)_{k=1}^\infty, z) \equiv \sum\limits_{j,\FF} a_k z_k = z.$$

\end{lemma}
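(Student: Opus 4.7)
The plan is to rewrite the $\mathscr{F}$-convergence condition in a form that avoids an existential quantifier over $\mathscr{F}$ itself, by exploiting the upward-closure of the filter.

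Unpacking the definition of $\mathscr{F}$-convergence, the formula $\varphi((a_k), z)$ says: for every $\varepsilon > 0$ there exists $A \in \mathscr{F}$ such that $\|z - \sum_{j=1}^n a_j z_j\| < \varepsilon$ for all $n \in A$. Naively, the existential over $A$ together with the $\mathbf{\Pi}^1_n$-condition ``$A \in \mathscr{F}$'' would give complexity $\mathbf{\Sigma}^1_{n+1}$, which is too coarse. The first key observation is that because $\mathscr{F}$ is upward-closed, the existence of such an $A \in \mathscr{F}$ is equivalent to the single set
\[
E_\varepsilon((a_k), z) := \Bigl\{ n \in \mathbb{N} : \Bigl\| z - \sum_{j=1}^n a_j z_j \Bigr\| < \varepsilon \Bigr\}
\]
belonging to $\mathscr{F}$. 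Thus $\varphi((a_k), z)$ is equivalent to
\[
\forall \varepsilon \in \mathbb{Q}_{>0} \colon E_\varepsilon((a_k), z) \in \mathscr{F}.
\]

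Next I would verify that the map $\Psi\colon \mathbb{K}^{\mathbb{N}} \times X \times \mathbb{Q}_{>0} \to \Delta$ sending $((a_k), z, \varepsilon)$ to the indicator function of $E_\varepsilon((a_k), z)$ is Borel; indeed, for each fixed $n$ the condition $n \in E_\varepsilon((a_k), z)$ is determined by the continuous function $((a_k), z) \mapsto \| z - \sum_{j=1}^n a_j z_j \|$, and a map into $\Delta = \{0,1\}^{\mathbb{N}}$ is Borel iff each coordinate is. Since $\mathscr{F}$ is $\mathbf{\Pi}^1_n$ as a subset of $\Delta$, the preimage $\Psi^{-1}(\mathscr{F})$ is $\mathbf{\Pi}^1_n$ in $\mathbb{K}^{\mathbb{N}} \times X \times \mathbb{Q}_{>0}$.

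Finally, $\varphi((a_k), z)$ is the intersection, over the countable set $\mathbb{Q}_{>0}$, of the $\mathbf{\Pi}^1_n$-sections of $\Psi^{-1}(\mathscr{F})$. A countable intersection of $\mathbf{\Pi}^1_n$-sets is again $\mathbf{\Pi}^1_n$, so $\varphi$ is $\mathbf{\Pi}^1_n$, as required. The only slightly delicate step is the reduction via upward-closure at the start; everything afterwards is routine complexity bookkeeping.
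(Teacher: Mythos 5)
Your proof is correct and follows essentially the same route as the paper: reduce $\mathscr{F}$-convergence (via upward-closure) to the assertion that each set $E_{1/m}((a_k),z)$ belongs to $\mathscr{F}$, pull $\mathscr{F}$ back under the induced map into $\Delta$, and finish with a countable intersection of $\mathbf{\Pi}^1_n$ sets. If anything, you are slightly more careful than the paper, which asserts that the corresponding map into $\Delta$ is \emph{continuous} (it is in fact only Borel, each coordinate being the indicator of an open set), though this does not matter since projective classes are closed under Borel preimages as well.
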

\begin{proof}
Let us consider the set 
$$
Y:=\{ \big((a_k)_{k=1}^\infty,z \big) \in \mathbb K^\N \times X : \sum_{j,\FF}a_j z_j =z \}.
$$
We observe that $$\big((a_k)_{k=1}^\infty,z \big) \in Y \Leftrightarrow \forall_{m \in \N}  \{ j \in \mathbb N\colon \Big\| \sum_{i=1}^j a_i z_i - z \Big\| < \tfrac{1}{m}  \} \in \FF.$$
Let us consider the (continuous) function $f\colon X^{\mathbb 
N}\times X \times \N \to \R$ given by $$f((a_k)_{k=1}^\infty,z,j)=\Big\| \sum_{i=1}^j a_i z_i - z \Big\|.$$ We see that
$$\big((a_k)_{k=1}^\infty,z \big) \in Y \Leftrightarrow \forall_{m \in \N}\; f^{-1} \big[(-\infty, \tfrac{1}{m})\big]_{((a_k)_{k=1}^\infty, z)} \in \FF,$$
so it is enough to show that for any fixed $m\in \N$ set $$Y_m := \{ ((a_k)_{k=1}^\infty,z): f^{-1} \big[(-\infty, \tfrac{1}{m})\big]_{((a_k)_{k=1}^\infty, z)} \in \FF \}$$ is $\mathbf \Pi^1_n$ in $X \times X^\N$. In order to do so, let us consider the function $g: X \times X^\N \to \Delta$ given by $$g(((a_k)_{k=1}^\infty,z)=f^{-1} \big[(-\infty, \tfrac{1}{m})\big]_{((a_k)_{k=1}^\infty, z)}.$$
It is straightforward to check that by openness of set $f^{-1} \big[(-\infty, \tfrac{1}{m})\big]$, the function $g$ is continuous. Since $Y_m = g^{-1}(\FF)$ and projective classes are closed under continuous preimages, we get that $Y$ is of class $\mathbf \Pi^1_{n}$.\end{proof}

\subsection{Proof strategy} 
The proof of Theorem A is divided into two parts. Firstly, we note that, by  Corollary~\ref{garnirwright}, under the existence of a super-compact cardinal (or, infinitely many Woodin cardinals followed by a measurable cardinal), all linear maps between (separable) Banach spaces in $L(\mathbb R)$ are continuous. Thus, if a Banach space in $L(\R)$ has an $\FF$-basis, then then the coordinate functionals associated to that basis are continuous. \medskip

Secondly, we notice that for a projective filter $\mathscr F\subset \Delta$ the formula 
\begin{quote}\emph{If a Banach space $X$ admits an $\FF$-basis, then the coordinate functionals associated to that basis are continuous.}\end{quote}
is actually projective, and as such, absolute between $L(\mathbb R)$ and $V$. Being true in $L(\mathbb R)$, it must be true in $V$ too, where the Axiom of choice holds, if we wish to assume it (\emph{cf}. Remark~\ref{mainremark}).
We are now ready to prove the main theorem in full detail.


\begin{proof}[Proof of Theorem A]We fix an admissible Polish topology on $\text{SB}$, the space of closed linear subspaces of $C(\Delta)$ so that its Borel $\sigma$-algebra agrees with the Effros--Borel $\sigma$-algebra. The ambient Polish space we will be working with is
$$Z= \text{SB} \times C(\Delta) \times C(\Delta)^{\mathbb N} \times \mathbb K^{\mathbb N} \times \mathbb N^{\mathbb N}.$$

For brevity of notation, we require to introduce some conventions concerning quantifiers. Every factor subspace of $Z$ comes with a naturally distinguished point; these are respectively:
$$\{0\}, 0, (0,0,\ldots), (0,0,\ldots), (1,1,\ldots).$$ 
This allows us to identify each factor with the closed subspace of $Z$ comprising the product of singletons being the above-listed distinguished points that are not in the given factor and the factor itself. For example, we identify $C(\Delta)$ with
$$\{\{0\}\}\times C(\Delta) \times \{(0,0,\ldots)\}\times \{ (0,0,\ldots)\} \times \{(1,1,\ldots)\}$$
and so on. \smallskip

For a Borel subset $F$ of $Z$ and a formula $\psi$, the expressions $\forall_{z\in F}\psi(z)$ (respectively, $\exists_{z\in F} \psi(z)$) mean $\forall_{z\in Z} (z\in Z \setminus F \vee \psi(z))$ ($\exists_{z\in Z} (z\in F \wedge \psi(z))$). Moreover, we will require making statements about separable Banach spaces themselves (that is, points in $\text{SB}$) and this needs further explanation indeed.\smallskip

For example, for statements of the form
$$\ou_{X\in {\rm SB}} \ou_{(x_n)_{n=1}^\infty\in X} \Phi(X,(x_n)_{n=1}^\infty),$$
where $\Phi$ is some formula, have to be expressed as statements whose quantifiers are bounded by $Z= \text{SB} \times C(\Delta)^{\mathbb N} \times C(\Delta) \times \mathbb K^{\mathbb N} \times \mathbb N^{\mathbb N}$ only. As originally we quantify over
$$
\text{SB} \times X^{\mathbb N} \times X \times \mathbb K^{\mathbb N} \times \mathbb N^{\mathbb N},
$$
where $\forall_{(x_n)_{n=1}^\infty\in X}$ formally depends on $X\in {\rm SB}$, we may express the latter set as 
$$\Big(\bigcap_{n \in \N } \text{Ev} \times C(\Delta)^{\N \setminus \{n\}} \times C(\Delta)  \times \mathbb K^{\mathbb N} \times \mathbb N^{\mathbb N}\Big) \cap \big(\text{Ev} \times C(\Delta)^{\N } \times C(\Delta)\times \mathbb K^{\mathbb N} \times \mathbb N^{\mathbb N}\big),$$
where $\text{Ev}:=\{(X,x)\colon X\in \text{SB},\, x\in X\}$
is, by \eqref{closed}, a relatively closed subset of the product space $\text{SB}\times C(\Delta)$. Finally, we may apply the above-described conventions concerning quantification over subsets to ensure that all quantifiers are over $Z$.
\smallskip


%



We are now ready to formalise our formula asserting that if a separable Banach space has an $\FF$-basis, then the coordinate functionals are continuous:

$$\begin{array}{l} \forall_{ X \in {\rm SB}}\forall_{ (x_k)_{k=1}^\infty \in X^\N} \Big[ \big(\forall_{y \in X}\; \exists!_{(a_k)_{k=1}^\infty \in \mathbb K^\N}\;\sum\limits_{k,\FF} a_k x_k = y \big)  \Rightarrow \\

\Rightarrow \big(\exists_{(M_k)_{k=1}^\infty \in \N^\N}\;\forall_{y \in X}\; \exists_{(a_k)_{k=1}^\infty \in \mathbb K^\N}\;\sum\limits_{k,\FF} a_k x_k = y \wedge |a_k|\leq \|y\|\cdot M_k\big)\Big]
\end{array}$$
Equivalently,
$$\begin{array}{l} \forall_{ X \in {\rm SB}}\forall_{ (x_k)_{k=1}^\infty \in X^\N} \Big[ \neg\big(\forall_{y \in X}\; \exists!_{(a_k)_{k=1}^\infty \in \mathbb K^\N}\;\sum\limits_{k,\FF} a_k x_k = y \big)  \vee\\

\vee \big(\exists_{(M_k)_{k=1}^\infty \in \N^\N}\;\forall_{y \in X}\; \exists_{(a_k)_{k=1}^\infty \in \mathbb K^\N}\;\sum\limits_{k,\FF} a_k x_k = y \wedge |a_k|\leq \|y\|\cdot M_k\big)\Big].
\end{array}$$

By Lemma~\ref{fconvergence}, the expression $\sum_{k,\FF} a_k x_k = y$ does not contribute to the complexity of the above formula, so by a direct count of the quantifiers, we infer that the formula is projective, being  $\mathbf \Pi^1_{n+4}$, when $\FF$ is $\mathbf \Pi^1_{n}$.
\end{proof}

\begin{remark}
One may be rightfully apprehensive that the tools used along the way yield a~result that is \emph{too good to be true} and try to achieve by similar methods conclusions that are plainly false in \textsf{ZFC} in order to refute it. For example, it is desirable to attempt obtaining continuity of \emph{all} linear functionals on separable Banach spaces in this way. Such a reasoning would likely go along the following lines: in \textsf{ZF}, assuming that all projective sets in separable Banach space have the Baire property, all linear functionals are automatically continuous. The formula defining continuity is projective, thus by the absoluteness argument, it must be true in $V$. This however is incorrect.\smallskip

We want to point out that one must make sure that all spaces used to quantify the formula are Polish, however (in \textsf{ZFC}) the algebraic dual of an infinite-dimensional separable Banach space cannot be Polish under any topology as it has the cardinality $2^{\mathfrak{c}}$. \end{remark}

\subsection*{Closing remark} It would be, of course, desirable to remove the large-cardinal assumption from the statement of Theorem A together with the hypothesis that the filter must be projective. One has to bear in mind that the class of projective filters is rather small, as $\Delta$ has \emph{only} continuum many projective subsets, yet already the set of all maximal filters (ultrafilters) has cardinality $2^{\mathfrak{c}}$, so there appears to be a whole grey area of filters to which our methods are not applicable. 

\subsection*{Acknowledgements} We are indebted to Andr\'es Caicedo for drawing our attention to the Shelah--Woodin theorem and explaining to us various nuances concerning $L(\mathbb R)$.

\bibliographystyle{plain}

\end{document}